\long\def\ignore#1{}
\long\def\cz#1{}% Mark comments
\long\def\czo#1{}% Original text
\long\def\czn#1{}% Note for me
\long\def\czc#1#2{}
\long\def\czd#1{}
\long\def\czD#1{}
\newtheorem{thm}{Theorem}[section]
\newtheorem{cor}[thm]{Corollary}
\newtheorem{prop}[thm]{Proposition}
\newtheorem{lem}[thm]{Lemma}
\theoremstyle{definition}
\newtheorem{problem}[thm]{Problem}
\newtheorem{case}{Case}
\newtheorem{ccase}{Case}[case]
\newtheorem{cccase}{Case}[ccase]
\newtheorem{claim}{Claim}
\theoremstyle{remark}
\let\c@equation\c@thm
\begin{document}

\title{The Chv\'{a}tal-Erd\H{o}s Condition for Prism-Hamiltonicity}

\author{%
	M. N. Ellingham\thanks{Supported by Simons Foundation award no. 429625.}%
	\qquad Pouria Salehi Nowbandegani \\
 Department of Mathematics, 1326 Stevenson Center,\\
	Vanderbilt University, Nashville, TN 37240\\
	\texttt{mark.ellingham@vanderbilt.edu}\\
	\qquad
	\texttt{pouria.salehi.nowbandegani@vanderbilt.edu}\\
}

\date{6 December 2018}

\maketitle

\begin{abstract}
 The prism over a graph $G$ is the cartesian product $G \Box K_2$. It is
known that the property of having a Hamiltonian prism
(prism-Hamiltonicity) is stronger than that of having a $2$-walk
(spanning closed walk using every vertex at most twice) and weaker than
that of having a Hamilton path. For a graph $G$, it is known that
$\alpha(G) \leq 2 \kappa(G)$, where $\alpha(G)$ is the independence
number and $\kappa(G)$ is the connectivity, imples existence of a
$2$-walk in $G$, and the bound is sharp. West asked for a bound on
$\alpha (G)$ in terms of $\kappa (G)$ guaranteeing prism-Hamiltonicity.
In this paper we answer this question and prove that $\alpha(G) \leq 2
\kappa(G)$ implies the stronger condition, prism-Hamiltonicity of $G$.
 \end{abstract}

%\tableofcontents

\section{Introduction}

In this paper, we consider only simple, finite, and undirected graphs. Let $G$ be a graph. By $\kappa (G)$ and $\alpha (G)$ we mean the connectivity and independence number of $G$, respectively. The {\it prism} over a graph $G$ is the cartesian product $G \Box K_2$. If $G \Box K_2$ is Hamiltonian, we say that $G$ is {\it prism-Hamiltonian}. A {\it$t$-tree} of $G$ is a spanning tree of $G$ with maximum degree at most $t$. A {\it$t$-walk} of $G$ is a spanning closed walk that visits every vertex at most $t$ times.

Kaiser et al.~\cite{Kaiser} showed that the property of having a Hamiltonian prism is stronger than
that of having a $2$-walk and weaker than that of having a Hamilton path, i.e.,
\begin{center}
Hamilton path $\Rightarrow$ prism-Hamiltonian $\Rightarrow$ 2-walk,
\end{center}
and there are examples in \cite{Kaiser} showing that none of these implications can
be reversed. It is of interest to determine whether or not a graph fits in
between the properties of having a Hamilton path and having a $2$-walk. In
particular, which graphs are prism-Hamiltonian even though they may not have a
Hamilton path?

Chv\'{a}tal and Erd\H{o}s  proved the
following theorem.

 \begin{thm} [Chv\'{a}tal and Erd\H{o}s \cite{ChE}\label{ch-e}]
 Let $G$ be a graph with at least three vertices. If $\alpha(G) \leq
\kappa(G)$, then $G$ is Hamiltonian.
 \end{thm}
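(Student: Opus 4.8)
The plan is to argue by contradiction via a longest-cycle argument, manufacturing an independent set that is too large to fit under $\alpha(G)$. First I would dispose of the trivial case: if $\alpha(G) = 1$ then $G$ is complete, and since $G$ has at least three vertices it is Hamiltonian. So I may assume $\alpha(G) \geq 2$, whence $\kappa(G) \geq \alpha(G) \geq 2$; in particular $G$ is $2$-connected and contains a cycle. Suppose for contradiction that $G$ is not Hamiltonian, and let $C$ be a longest cycle in $G$, fixed with an orientation so that each vertex $v$ of $C$ has a well-defined successor $v^+$. Since $G$ is not Hamiltonian there is a vertex off $C$; let $H$ be a component of $G - V(C)$ and let $P = \{v_1, \dots, v_m\}$ be the set of vertices of $C$ having a neighbor in $H$, listed in the cyclic order induced by the orientation.

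The first structural step is to show that no two attachment vertices are consecutive on $C$. If $v_{i+1} = v_i^+$ joined two attachment vertices, then since $H$ is connected and both endpoints have a neighbor in $H$, there is a path through $H$ from $v_i$ to $v_{i+1}$ using at least one vertex of $H$; replacing the edge $v_i v_i^+$ of $C$ by this path yields a strictly longer cycle, a contradiction. Consequently each successor $v_i^+$ lies strictly between consecutive attachment vertices, and so is \emph{not} itself an attachment vertex; in particular $v_i^+$ has no neighbor in $H$. Moreover $P$ separates $H$ from the remainder $V(C) \setminus P$, which is nonempty since it contains $v_1^+$, so $P$ is a cutset and $m = |P| \geq \kappa(G)$.

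Finally I would exhibit a large independent set. Fix any $h \in H$ and consider $S = \{v_1^+, \dots, v_m^+, h\}$, a set of $m + 1$ distinct vertices. Each $v_i^+$ has no neighbor in $H$, so no $v_i^+$ is adjacent to $h$. If $S$ is independent, then $\alpha(G) \geq |S| = m + 1 \geq \kappa(G) + 1 > \alpha(G)$, a contradiction, and the theorem follows. The main obstacle is therefore the remaining claim that the successors are pairwise nonadjacent, i.e.\ $v_i^+ \not\sim v_j^+$ for $i \neq j$. I expect to establish this by a rerouting (detour) argument: given a chord $v_i^+ v_j^+$, I would combine it with the two arcs of $C$ cut off by $v_i$ and $v_j$ and with a path through $H$ joining $v_i$ to $v_j$, reassembling these pieces into a single cycle that visits every vertex of $C$ together with at least one vertex of $H$, hence strictly longer than $C$ and contradicting maximality. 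Orienting the two arcs correctly and inserting the $H$-path so that the result is a genuine cycle (not merely a closed walk that repeats vertices) is the delicate point of the whole argument.
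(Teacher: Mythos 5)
This is Theorem~\ref{ch-e} in the paper, which is quoted from Chv\'atal and Erd\H{o}s \cite{ChE} and not proved here, so there is no in-paper argument to compare against; I can only assess your proof on its own terms. It is the standard longest-cycle proof and it is sound. The reductions (complete case, $\kappa \ge 2$, existence of a cycle), the observation that no two attachment vertices of a component $H$ of $G - V(C)$ are consecutive on $C$, the cutset argument giving $m \ge \kappa(G)$, and the final counting all check out. The one step you defer --- that $v_i^+ \not\sim v_j^+$ --- is indeed the only place requiring care, and the rerouting you describe works exactly as sketched: traverse $C$ forward from $v_i^+$ to $v_j$, pass through $H$ from $v_j$ to $v_i$ (possible since $H$ is connected and both have neighbours in $H$, contributing at least one new vertex), traverse $C$ backward from $v_i$ to $v_j^+$, and close with the chord $v_j^+v_i^+$; the two arcs partition $V(C)$, so this is a genuine cycle longer than $C$. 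One cosmetic remark: the original argument of Chv\'atal and Erd\H{o}s takes a single vertex $u$ off $C$ and uses Menger's theorem to obtain $\kappa$ internally disjoint $u$--$C$ paths, whereas you route through an entire component $H$ and get the bound $m \ge \kappa$ from the cutset property of the attachment set; the two are interchangeable, and yours avoids an explicit appeal to Menger. You should still write out the deferred step rather than leave it as an expectation, but there is no gap in the plan.
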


Suppose $G$ is a graph with $|V(G)| \ge 2$ and $\alpha(G) \leq \kappa(G)
+ 1$. By adding a new vertex $v$ adjacent to all vertices of $G$, we
construct $G'$ which satisfies the hypothesis of Theorem \ref{ch-e}. 
Hence $G'$ is Hamiltonian, so that $G=G'-v$ has a Hamilton path.
 This also holds if $|V(G)|=1$, giving the following corollary.

 % So we have the following corollary.

 \begin{cor} \label{hamilton path}
 Let $G$ be a graph. If $\alpha(G) \leq \kappa(G) + 1$, then $G$ has a
Hamilton path.
 \end{cor}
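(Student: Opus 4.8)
The plan is to derive Corollary \ref{hamilton path} directly from the Chvátal–Erdős theorem by the same vertex-addition reduction sketched in the paragraph preceding the statement. First I would handle the trivial small cases separately: if $|V(G)| = 1$ then $G$ trivially has a Hamilton path (the single vertex), so I may assume $|V(G)| \ge 2$ for the main argument.

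Next I would construct the auxiliary graph $G'$ by adding a single new vertex $v$ joined to every vertex of $G$ (a cone over $G$). The two quantities we must control are $\alpha(G')$ and $\kappa(G')$. For the independence number: since $v$ is adjacent to everything, $v$ lies in no independent set of size $\ge 2$, so any maximum independent set of $G'$ with more than one vertex is contained in $V(G)$; hence $\alpha(G') \le \max\{\alpha(G), 1\} = \alpha(G)$ (using $|V(G)| \ge 2$ so $\alpha(G) \ge 1$). For the connectivity: adding a universal vertex raises the connectivity by at least one, so $\kappa(G') \ge \kappa(G) + 1$. Combining these with the hypothesis $\alpha(G) \le \kappa(G) + 1$ gives
\[
\alpha(G') \le \alpha(G) \le \kappa(G) + 1 \le \kappa(G'),
\]
so $G'$ satisfies the hypothesis $\alpha(G') \le \kappa(G')$ of Theorem \ref{ch-e}. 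Note $G'$ has $|V(G)| + 1 \ge 3$ vertices, so the theorem applies.

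By Theorem \ref{ch-e}, $G'$ is Hamiltonian. Deleting $v$ from a Hamilton cycle of $G'$ breaks the cycle at the single vertex $v$ and leaves a Hamilton path of $G' - v = G$, which is exactly the desired conclusion.

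The only subtle point, and the step I would treat most carefully, is the connectivity bound $\kappa(G') \ge \kappa(G) + 1$. This needs a brief justification rather than being asserted: any cutset of $G'$ must contain $v$ (since $v$ is adjacent to all other vertices, no vertex set avoiding $v$ can separate the rest once $v$ is present to connect them), and removing $v$ reduces $G'$ to $G$, so a minimum cutset of $G'$ consists of $v$ together with a cutset of $G$, giving $\kappa(G') = \kappa(G) + 1$ when $G$ is not complete, and the complete-graph case is easily checked directly. Everything else is an immediate consequence of the definitions, so I expect no further obstacles.
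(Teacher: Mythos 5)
Your proposal is correct and follows exactly the paper's argument: add a universal vertex $v$ to form $G'$, check $\alpha(G') \le \alpha(G) \le \kappa(G)+1 \le \kappa(G')$, apply Theorem \ref{ch-e} to get a Hamilton cycle of $G'$, and delete $v$ to obtain a Hamilton path of $G$, with the one-vertex case handled trivially. The extra care you take with the bound $\kappa(G') \ge \kappa(G)+1$ is sound and merely fills in a detail the paper leaves implicit.
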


Moreover, it is known  that $\alpha(G) \leq 2 \kappa(G) $ implies existence of a $2$-walk for $G$ \cite{Jackson}.

\begin{problem} [West \cite{whp}]
 Given $k$, what is the largest value of $a$ such that if $G$ is a graph with $\kappa(G)=k$ and $\alpha(G) = a$, then the prism over $G$ is Hamiltonian?
\end{problem}

For $a > k$, the complete bipartite graph  $K_{k, a}$ is $k$-connected and has independence number $a$. When $a > 2k$, the prism over $K_{k, a}$ is not Hamiltonian, since deleting the $2k$  vertices of degree $a + 1$ leaves $a$ components. Hence the answer to this problem  is at most $2k$.

The following theorem is our answer to this question.

 \begin{thm} \label{main theorem}
 Let $G$ be a graph with at least two vertices.  If $\alpha(G) \leq 2
\kappa (G)$, then $G$ is prism-Hamiltonian.
 \end{thm}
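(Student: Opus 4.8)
The plan is to reduce prism-Hamiltonicity to a purely structural condition on $G$ and then to build the required structure by a Chv\'atal--Erd\H{o}s-style extremal argument. Applying Theorem~\ref{ch-e} directly to $G \Box K_2$ is hopeless: its independence number can be as large as $\alpha(G)$, which under our hypothesis may far exceed $\kappa(G \Box K_2) \le \delta(G)+1$, so the Chv\'atal--Erd\H{o}s inequality for the prism itself simply fails. Instead I would invoke the characterization of Kaiser et al.~\cite{Kaiser}: $G \Box K_2$ is Hamiltonian if and only if $G$ has a spanning connected \emph{even cactus} (every block an edge or an even cycle) satisfying the local degree condition that makes it lift to a Hamilton cycle of the prism. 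It therefore suffices to construct such a spanning structure $H$ in $G$, and the whole argument can then be carried out inside $G$, where we control both $\alpha$ and $\kappa$.

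For the construction I would argue by extremality. Among all liftable even cacti in $G$ (not necessarily spanning) choose one, $H$, covering the maximum number of vertices, breaking ties lexicographically, say by maximizing the number of edges and then minimizing the number of bridge blocks. Suppose $H$ is not spanning and set $U = V(G)\setminus V(H) \neq \emptyset$. Since $G$ is $\kappa$-connected, Menger's theorem supplies $\kappa$ internally disjoint paths from a fixed vertex $u \in U$ to $H$; I would use their endpoints on $H$ as sites for \emph{augmenting moves}: extending a pendant (degree-one) vertex of the cactus along an edge into $U$, enlarging or splicing an even cycle through vertices of $U$ so as to absorb them while preserving parity, and rerouting at cut vertices so that the liftable degree condition is maintained. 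Any successful move enlarges $V(H)$, contradicting maximality.

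The heart of the proof, and the step I expect to be the main obstacle, is showing that when \emph{no} augmenting move is available one can exhibit an independent set of size $2\kappa(G)+1$, contradicting $\alpha(G)\le 2\kappa(G)$. As in Chv\'atal--Erd\H{o}s, the blocked configuration forces non-adjacencies: here the two-layered nature of the prism (equivalently, the two ``ends'' that each path block and each even cycle present) should yield two mutually non-adjacent ``successor'' vertices for each of the $\kappa$ connecting paths, which together with a vertex of $U$ assemble $2\kappa+1$ pairwise non-adjacent vertices. The delicate part is carrying this out while simultaneously respecting (i) the evenness of the cycle blocks, (ii) the liftability degree condition at cut vertices, and (iii) $\kappa$-connectivity; the parity constraint is the genuinely new ingredient compared with the $2$-walk argument of \cite{Jackson}, and arranging the bookkeeping so that the obstruction produces exactly $2\kappa+1$ independent vertices, rather than some smaller number, is where the real work lies.
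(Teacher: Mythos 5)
Your proposal is a program, not a proof: the step you yourself identify as ``where the real work lies'' --- showing that a maximal non-spanning even cactus with no augmenting move forces an independent set of size $2\kappa(G)+1$ --- is exactly the content of the theorem, and nothing in the proposal establishes it. The difficulty is not mere bookkeeping. To reach $2\kappa+1$ you need \emph{two} vertices per Menger fan path, and these $2\kappa$ vertices must be pairwise non-adjacent both within a single fan path and across different ones, and also non-adjacent to the outside vertex $u$; in the classical Chv\'atal--Erd\H{o}s argument one only extracts one successor per path, and already doubling this in the $2$-walk setting (Jackson--Wormald) requires a different, tree-based argument rather than a direct imitation. On top of that you must preserve evenness of every cycle block under each proposed splice, and you give no mechanism for controlling parity; ``absorb them while preserving parity'' is an assertion, not an argument. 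A secondary issue: the reduction you invoke is overstated --- having a spanning even cactus is a \emph{sufficient} condition for prism-Hamiltonicity (Lemma~\ref{ham-hairy}, due to \v{C}ada et al.), not a characterization, though only sufficiency is needed here.

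For contrast, the paper avoids the extremal/augmentation route entirely. It first applies Corollary~\ref{hamilton path} to dispose of $\alpha \le \kappa+1$, then covers $V(G)$ by at most $\alpha-\kappa \le \kappa$ disjoint paths (by adding a clique and applying Theorem~\ref{ch-e}), and for $\kappa \ge 3$ invokes the Bondy--Lov\'asz theorem to find a single \emph{even} cycle through one endpoint of each path, hanging the path segments off that cycle to form the spanning even cactus --- the parity problem is thus solved once, by an off-the-shelf theorem, rather than maintained through a sequence of local moves. The case $\kappa=2$ (where Bondy--Lov\'asz does not apply) is handled by a separate, lengthy parity case analysis on two covering paths. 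If you want to salvage your approach, the place to start is to prove precisely the independence lemma for a blocked configuration; as written, the proposal has a genuine gap at its core.
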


This theorem shows that the Chv\'{a}tal-Erd\H{o}s condition sufficient for being prism-Hamiltonian is the same as for the weaker property of having a $2$-walk.

Here we list the results that we need in our proofs.
\begin{thm} [Bondy and Lov\'{a}sz \cite{b-l}\label{b-l}]
Let $S$ be a set of $k$ vertices in a $k$-connected graph $G$, where $k  \geq 3$. Then
there exists an even cycle in $G$ through every vertex of $S$.
\end{thm}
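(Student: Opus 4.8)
The plan is to prove the statement in two stages: first produce \emph{some} cycle through $S$, and then upgrade its parity to even. The first stage is the classical Dirac-type statement, which I would reprove by a fan argument. Choose a cycle $C$ passing through as many vertices of $S$ as possible, and suppose for contradiction that some $v \in S$ lies off $C$. By Menger's theorem there is a fan from $v$ to $V(C)$, that is, $\min(k,|V(C)|)$ paths from $v$ ending at distinct vertices of $C$, internally disjoint and meeting $C$ only at their endpoints. These attachment points split $C$ into $\min(k,|V(C)|)$ arcs. Since at most $k-1$ vertices of $S$ lie on $C$ (as $v$ does not), at most $k-1$ arcs can contain an $S$-vertex in their interior, so some arc has $S$-free interior; replacing that arc by the two fan paths through $v$ yields a cycle through $v$ and through every $S$-vertex of $C$, contradicting maximality. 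Hence $C$ passes through all of $S$.

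For the second stage, take a cycle $C$ through all of $S$, chosen extremally (say shortest), and suppose it is odd. The idea is to flip its parity by a local reroute that preserves passage through $S$. The basic device is this: if a vertex $w \notin V(C)$ is adjacent to two consecutive vertices $a,b$ of $C$, then replacing the edge $ab$ by the path $a\,w\,b$ lengthens $C$ by one and produces an even cycle through $S$. More generally, a detour, by which I mean a path $P$ internally disjoint from $C$ joining two vertices $a,b \in V(C)$, can be used to replace an $S$-free arc between $a$ and $b$, changing the length by $|P|$ minus the arc length, and hence flipping parity whenever these two quantities have opposite parity. Using $k$-connectivity (here $k \ge 3$ supplies the slack) I would produce such a parity-changing detour: the $k$ vertices of $S$ cut $C$ into $k$ arcs, and a fan, or a set of three internally disjoint paths between suitably chosen vertices (two of which share parity, forcing the third to differ), furnishes a detour whose parity can be matched against an $S$-free arc of the opposite parity. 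Each successful reroute contradicts the assumption that no even cycle through $S$ exists.

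The hard part is the second stage, namely guaranteeing that a parity-flipping detour always exists. The obstruction is a putative configuration in which every available bridge across $C$ has exactly the parity of the arc it spans, so that all reroutes preserve oddness; ruling this out requires the right extremal choice of $C$ together with careful bookkeeping of arc parities, and a separate treatment of degenerate cases, for instance when $C$ is Hamiltonian so that only chords are available, or when $G$ is close to bipartite. Since an even cycle, unlike an odd one, faces no bipartite obstruction, I expect these cases to resolve, but the parity accounting is where the real work lies.
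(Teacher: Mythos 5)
This statement is quoted in the paper from Bondy and Lov\'{a}sz's 1981 Combinatorica paper and is used as a black box; the paper contains no proof of it, so your attempt can only be judged on its own merits, and on those merits it has a genuine gap. Your first stage is fine: the fan argument correctly reproves Dirac's theorem that any $k$ vertices of a $k$-connected graph lie on a common cycle (the counting of arcs with $S$-free interiors, and the degenerate case $|V(C)| < k$, both work as you describe). But the second stage is a program, not a proof, and you say so yourself: the entire content of the theorem is precisely the step you leave open, namely that when every cycle through $S$ is odd one can always find a detour, internally disjoint from $C$, whose endpoints cut off an $S$-free arc and whose length differs in parity from that arc. Nothing in the sketch rules out the obstruction you name (every bridge matching the parity of the arc it spans), and no extremal choice of $C$ is identified that would do so; ``I expect these cases to resolve'' is where a proof would have to begin, not end.

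Two concrete points show why this cannot be patched by local bookkeeping alone. First, the theorem is false for $k=2$: the graph $C_5$ is $2$-connected and contains no even cycle whatsoever, so any correct argument must use the third unit of connectivity in an essential, global way; your sketch gestures at three internally disjoint paths (two of which share parity) but never connects that device to a detour spanning an $S$-free arc --- the even cycle formed by two same-parity paths need not pass through any prescribed vertex of $S$, let alone all $k$ of them. Second, a parity-flipping detour needs its endpoints to leave all of $S$ on one side of $C$, which is a strong positional demand when the $k$ vertices of $S$ are spread around the cycle; in the Hamiltonian case only chords are available and the demand is stronger still. Bondy and Lov\'{a}sz's actual argument is not a local reroute of an extremal cycle but a substantially more involved induction with a careful analysis of bridges, and that machinery is exactly what is missing here.
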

\begin{thm} [Jackson and Wormald \cite{Jackson}\label{k-walk, k-tree}]
The existence of a $t$-tree implies the existence of a $t$-walk, and the existence of a $t$-walk implies the existence of a $(t+1)$-tree.
\end{thm}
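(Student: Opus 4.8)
The plan is to prove the two implications separately, each by an explicit construction; the whole difficulty lies in counting passes through a vertex accurately.

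For the first implication, suppose $G$ has a $t$-tree $T$, that is, a spanning tree of maximum degree at most $t$. I would form the multigraph $M$ obtained from $T$ by doubling every edge. Then $M$ is connected and spanning, and every vertex $v$ has even degree $\deg_M(v) = 2\deg_T(v)$, so $M$ admits an Eulerian circuit $W$. Since $W$ traverses each edge of $M$ exactly once, it is a closed spanning walk of $G$ (using each edge of $T$ twice). The number of times $W$ passes through a vertex $v$ equals $\deg_M(v)/2 = \deg_T(v) \le t$, so $W$ is a $t$-walk.

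For the second implication, suppose $G$ has a $t$-walk $W = v_0 v_1 \cdots v_m$ with $v_m = v_0$. I would build a spanning tree $T$ rooted at $v_0$ by a first-visit rule: for each vertex $v \neq v_0$, let $f(v)$ be the least index with $v_{f(v)} = v$, and place the edge $v_{f(v)-1} v_{f(v)}$ into $T$. Each vertex other than $v_0$ contributes exactly one such edge, giving $|V(G)| - 1$ edges; since $v_{f(v)-1}$ is always visited strictly before $v$, following parent edges leads back to $v_0$, so $T$ is connected and hence a spanning tree. To bound the maximum degree, observe that the children of a vertex $u$ correspond injectively to the indices $i = f(v)-1 < m$ with $v_i = u$, so $u$ has at most (number of visits to $u$) $\le t$ children. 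Adding the single parent edge when $u \neq v_0$ gives degree at most $t+1$, so $T$ is a $(t+1)$-tree.

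The two constructions themselves are routine; the step demanding the most care is the degree bookkeeping in the second implication, where one must verify that distinct children of $u$ arise from distinct visits to $u$, so that the child count is genuinely bounded by the number of visits and the sharp constant $t+1$ (rather than something larger) emerges. The first implication rests on the parallel fact that an Eulerian circuit passes through each vertex exactly half its degree many times, so in both directions the crux is an exact accounting of the passes through a vertex in a closed walk.
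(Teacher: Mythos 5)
Your proof is correct. A caveat on the comparison you asked for: the paper itself offers no proof of this statement --- it is quoted as a known result of Jackson and Wormald \cite{Jackson} --- so your argument can only be judged on its own merits, and it holds up; indeed, both of your constructions are the standard ones (essentially those in the cited paper). For the first implication, doubling the edges of the $t$-tree $T$ and taking an Eulerian circuit is sound, and your accounting is right: every vertex $v$ is passed through exactly $\deg_M(v)/2 = \deg_T(v) \le t$ times, with the usual convention that the start and end of the circuit count as a single visit to the base vertex. For the second implication, the first-visit parent rule does produce a spanning tree (each non-root vertex has one parent, and following parents strictly decreases the first-visit index, so all paths terminate at $v_0$), and you correctly identified and discharged the one delicate point: distinct children of $u$ yield distinct indices $i = f(v)-1$ with $v_i = u$ (since $f(v) = f(v')$ forces $v = v'$), and the number of such indices among $\{0, \dots, m-1\}$ is exactly the number of visits to $u$, at most $t$; adding the parent edge gives degree at most $t+1$, and the root, having no parent, has degree at most $t$. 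No gaps.
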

\begin{thm}[Batagelj and Pisanski \cite{BaPi}\label{tree}]
Let $T$ be a tree with maximum degree $\Delta(T) \geq 2
$. Then $T \Box C_t$ is Hamiltonian if and only if $\Delta(T) \leq t$.
\end{thm}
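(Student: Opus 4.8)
My plan is to prove the two directions of the equivalence separately. For the necessity of $\Delta(T)\le t$ I would use a vertex-cut (toughness) obstruction, and for the sufficiency I would build a Hamilton cycle explicitly by an induction on $|V(T)|$ that absorbs one leaf at a time.

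\emph{Necessity.} Suppose $\Delta(T)\ge t+1$ and let $v$ be a vertex of $T$ with $\deg_T(v)=\Delta(T)=d$. In $T\Box C_t$ call $S=\{(v,i):i\in V(C_t)\}$ the \emph{column} over $v$, so $|S|=t$. Deleting $v$ from the tree leaves exactly $d$ subtrees $T_1,\dots,T_d$, and I claim that deleting $S$ from $T\Box C_t$ leaves the $d$ nonempty pieces $T_1\Box C_t,\dots,T_d\Box C_t$ as distinct components: every edge of $T\Box C_t$ lies either inside a single column or between the columns of two adjacent tree-vertices, and in $T$ the only vertex joining different $T_j$'s is $v$, so no edge of $(T\Box C_t)-S$ can join distinct pieces. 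Thus $(T\Box C_t)-S$ has at least $d\ge t+1$ components while $|S|=t$. Since in any Hamiltonian graph the deletion of a nonempty vertex set $S$ leaves at most $|S|$ components, $T\Box C_t$ is not Hamiltonian; this is the contrapositive of the ``only if'' direction.

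\emph{Sufficiency.} Here I would index the $|V(T)|$ copies of $C_t$ by the vertices of $T$ (the columns) and, for each edge $uw\in E(T)$, use the perfect matching joining $(u,i)$ to $(w,i)$. The basic move is a \emph{splice}: if a Hamilton cycle currently uses a column edge $(v,i)(v,i+1)$ and we wish to absorb a fresh column over a new leaf $\ell$ adjacent to $v$, we delete that edge and reroute through $(v,i),(\ell,i),(\ell,i-1),\dots,(\ell,i+1),(v,i+1)$, i.e.\ the long Hamilton path of column $\ell$ between the two matched vertices. Since this replaces one edge of the cycle by an internally disjoint path with the same endpoints, the result is still a single spanning cycle; it inserts all $t$ vertices of column $\ell$, consumes one column edge at $v$, and creates $t-1$ column edges at $\ell$. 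The crux is to guarantee that a usable parent edge is always present, and I would secure this with the strengthened induction hypothesis that $T\Box C_t$ has a Hamilton cycle $H$ using, for \emph{every} vertex $w$, at least $t-\deg_T(w)$ of the $t$ edges in column $w$ (a nonnegative requirement since $\deg_T(w)\le\Delta(T)\le t$).

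The induction is then routine. The base case $|V(T)|=1$ is the cycle $C_t$ itself, which uses all $t=t-\deg(w)$ column edges. For the step, remove a leaf $\ell$ with neighbour $v$ to form $T'=T-\ell$ (where $\Delta(T')\le\Delta(T)\le t$), take the cycle $H'$ supplied by the hypothesis, and note that column $v$ carries at least $t-\deg_{T'}(v)=t-\deg_T(v)+1\ge 1$ of its edges in $H'$, so a splice is available; splicing in column $\ell$ yields $H$. Verifying the invariant for $H$ is a one-line count per column: unaffected columns are unchanged, column $v$ loses exactly one edge (so it still has $\ge t-\deg_T(v)$), and column $\ell$ gains $t-1=t-\deg_T(\ell)$ edges. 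I expect the main obstacle to be precisely the discovery of this invariant: the naive version that tracks only a single column stalls, because a high-degree vertex can exhaust all of its available parent edges, whereas the bound $t-\deg_T(w)$ is exactly tight enough to survive every splice and is what makes the hypothesis $\Delta(T)\le t$ do its work. Finally I would dispatch the degenerate cases directly ($T=K_1$ or $K_2$, and the convention $t\ge 3$ so that $C_t$ is a simple cycle), which is why the statement is phrased with $\Delta(T)\ge 2$.
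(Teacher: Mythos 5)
The paper does not prove this statement at all: it is imported as a known result of Batagelj and Pisanski (Theorem~\ref{tree}) and used as a black box in Proposition~\ref{main theorem 2}, so there is no internal proof to compare yours against. Judged on its own, your argument is correct and complete. The necessity direction is sound: deleting the $t$-vertex column over a vertex of degree $d \ge t+1$ leaves exactly $d$ components $T_1 \Box C_t, \dots, T_d \Box C_t$ (your edge analysis rules out cross edges), violating the $1$-toughness of Hamiltonian graphs. The sufficiency direction is also sound, and your strengthened invariant --- that the cycle retains at least $t-\deg_T(w)$ of the $t$ column edges at every vertex $w$ --- is exactly the right bookkeeping: when a leaf $\ell$ is re-attached to $v$, the count $t-\deg_{T'}(v) = t-\deg_T(v)+1 \ge 1$ guarantees a spliceable edge, the splice consumes precisely one column edge at $v$ and deposits $t-1$ at $\ell$, and all three cases of the verification check out as you state. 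This leaf-absorption construction is essentially the same inductive mechanism as in the original Batagelj--Pisanski note, so you have in effect reconstructed their proof; the main thing your write-up adds is making the invariant explicit, which (as you observe) is where the hypothesis $\Delta(T) \le t$ actually gets used. One tiny point worth stating rather than gesturing at: the base case of the induction is $|V(T)|=1$, which lies outside the theorem's hypothesis $\Delta(T) \ge 2$, so the induction should be phrased as proving the stronger statement for all trees with $\Delta(T) \le t$ (including $K_1$ and $K_2$), from which the theorem follows; your closing remark about degenerate cases suggests you saw this, but it belongs in the induction set-up, not as an afterthought.
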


A {\it spanning cactus} in a graph $G$ is
a spanning connected subgraph of maximum degree $3$ that is the union of vertex-disjoint cycles $C_1, C_2, \dots,C_s$ and vertex-disjoint paths $P_1, P_2, \dots, P_t$ such that the graph has no cycles other than $C_1, C_2, \dots, C_s$. The cactus is said to be {\it even} if all of its cycles are even, that is, if the cactus is a bipartite graph.

\begin{lem} [\v{Cada} et al.~\cite{Cada}] \label{ham-hairy}
If $G$ contains a spanning even cactus, then $G$ is prism-Hamiltonian.
\end{lem}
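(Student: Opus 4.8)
The plan is to reduce to the cactus itself and then build a Hamilton cycle of its prism recursively. Since a spanning even cactus $H$ is a spanning subgraph of $G$, the prism $H \Box K_2$ is a spanning subgraph of $G \Box K_2$; hence it suffices to prove that $H \Box K_2$ is Hamiltonian. First I would record the structure of $H$ forced by the definition: because the cycles $C_1,\dots,C_s$ are pairwise vertex-disjoint, the paths $P_1,\dots,P_t$ are pairwise vertex-disjoint, and each path can share a vertex only with a cycle, every vertex lies on at most one cycle and in at most one path. Together with $\Delta(H)\le 3$ this means that each cycle vertex carries at most one incident path edge, and that $H$ has no degree-$3$ vertex lying in a tree-like part; contracting each cycle to a point turns $H$ into a tree $\mathcal T$ whose nodes are the cycles and whose connections are the paths. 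I would build a Hamilton cycle of $H\Box K_2$ by working up $\mathcal T$.

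Writing the two copies of a vertex $v$ as $(v,0),(v,1)$ and the edge between them as its rung, the central gadget is the following. For an even cycle $C=c_1c_2\cdots c_{2m}$, the prism $C\Box K_2$ has a Hamilton cycle using the rung at every vertex: take all $2m$ rungs together with the complementary perfect matchings $M_0=\{c_1c_2,c_3c_4,\dots\}$ on level $0$ and $M_1=\{c_2c_3,c_4c_5,\dots\}$ on level $1$; tracing shows these $4m$ edges form a single spanning cycle. This is exactly where evenness is used: for an odd cycle the analogous all-rung $2$-factor splits into two cycles. I would then introduce super-rungs: if $H_w$ is a pendant subcactus attached at $w$ and $H_w\Box K_2$ has a Hamilton path from $(w,0)$ to $(w,1)$, then replacing the rung at $w$ in any Hamilton cycle by this path yields a Hamilton cycle of the larger prism that now also covers $H_w$.

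The recursion maintains the invariant that for a subcactus $H_r$ rooted at $r$, the prism $H_r\Box K_2$ has a Hamilton path from $(r,0)$ to $(r,1)$ (equivalently, $H_r$ provides a super-rung at $r$). For a pendant path $r q_1\cdots q_\ell$ the ladder path $(r,0),(q_1,0),\dots,(q_\ell,0),(q_\ell,1),\dots,(q_1,1),(r,1)$ gives the base case, and a path joining $r$ to a child cycle is handled the same way by appending the child's super-rung at its far end. For a cycle through $r$, I would take the all-rung Hamilton cycle of the prism of that cycle, substitute the super-rung of each child into the rung at the corresponding cycle vertex---legitimate because degree $3$ forces at most one child per vertex, so the substitutions never collide---and finally delete the rung at $r$ to turn the Hamilton cycle into the required Hamilton path from $(r,0)$ to $(r,1)$. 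Rooting $H$ at a vertex of one of its cycles (or, if $H$ has no cycle, noting that $H$ is a single path whose prism is a $2\times n$ grid), the full all-rung Hamilton cycle of the root cycle with every child substituted is already a Hamilton cycle of $H\Box K_2$.

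The main obstacle, and the heart of the argument, is the even-cycle gadget: verifying that the all-rung $2$-factor built from the two complementary matchings is a single Hamilton cycle rather than several, and seeing that this fails precisely for odd cycles. The remaining work---checking that super-rung substitutions compose correctly and that $\Delta(H)\le 3$ prevents two pendant structures from competing for the same rung---is routine bookkeeping along $\mathcal T$.
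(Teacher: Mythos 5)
Your proposal is correct: the all-rung Hamilton cycle of the prism over an even cycle (alternating the two complementary level matchings with the rungs) does close up into a single spanning cycle precisely when the cycle is even, the degree-$3$ bound does guarantee that each cycle vertex receives at most one pendant subcactus so the super-rung substitutions never collide, and the induction over the tree of cycles and connecting paths covers all cases. The paper itself gives no proof of this lemma --- it is quoted from \v{C}ada et al.~--- and your argument is essentially the standard one from that literature, so there is nothing in the paper to contrast it with.
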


\section{Proof of Theorem \ref{main theorem}}

Recall that Theorem \ref{main theorem} states that if $G$ is a connected
graph then $\alpha(G) \leq 2 \kappa (G)$ implies prism-Hamiltonicity of
$G$.

Let $P = a_1 a_2 \dots a_n$ be a path with $n$ vertices. By $P[a_i,
a_j]$ and $P(a_i, a_j)$ for $1 \leq i < j \leq n$ we mean the paths $a_i
a_{i+1} \dots a_j$ and $a_{i+1} a_{i+2} \dots a_{j-1}$, respectively.
Similarly, we can define   $P[a_i, a_j)$ and $P(a_i, a_j]$.

\begin{proof}[Proof of Theorem \ref{main theorem}]
 If $\alpha(G) \le \kappa(G)+1$ then, by Corollary \ref{hamilton path},
$G$ has a Hamilton path, and hence is prism-Hamiltonian by Lemma
\ref{ham-hairy}.  So we may assume that $\kappa(G)+2 \le \alpha(G) \le
2\kappa(G)$.  Thus, $\kappa(G) \ge 2$.

 We break the proof into two cases, $\kappa(G)=2$ and $\kappa(G) \ge 3$.
 Somewhat surprisingly, we have to work harder in the first case; in the
second case Bondy and Lov\'{a}sz's Theorem \ref{b-l} does a significant
amount of the work.

 \begin{case}% CA
 Suppose that $\kappa(G) = 2$.
 Since $\kappa(G)+2 = 4 \le \alpha(G) \le 2\kappa(G)=4$, we have
$\alpha(G)=4$.
 By adding two adjacent vertices (a complete
graph on two vertices, $K_2$) to $G$ that are adjacent to all vertices
of $G$, we obtain a new graph, say $G'$. Then $\kappa(G')=\alpha(G')=4$.
Therefore by Theorem \ref{ch-e} $G'$ is Hamiltonian. Removing these two
new vertices implies that $G$ has a Hamilton path or two vertex-disjoint
paths $P_1$ and $P_2$ that cover all vertices of $G$. In the former
case $G$ is prism-Hamiltonian, so we assume the latter case. Let $u_1$ and
$u_2$  be the end vertices of $P_1$  and $v_1$ and let  $v_2$  be the
end vertices of $P_2$.

 \begin{claim}% c1
 Each of $P_1$ and $P_2$ contains more than one vertex; otherwise, $G$ is
prism-Hamiltonian.
 \end{claim}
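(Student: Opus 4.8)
The plan is to suppose, without loss of generality, that $P_2$ consists of the single vertex $v$, so that $P_1 = a_1 a_2 \cdots a_m$ is a Hamilton path of $G - v$. Since $\kappa(G) \ge 2$ forces $\deg_G(v) \ge 2$, the vertex $v$ has at least two neighbors on $P_1$. First I would dispose of two easy situations. If $v$ is adjacent to an endpoint $a_1$ or $a_m$ of $P_1$, then prepending or appending $v$ produces a Hamilton path of $G$, and $G$ is prism-Hamiltonian by Lemma \ref{ham-hairy}; so I may assume that every neighbor of $v$ is internal.

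For two neighbors $a_i, a_j$ of $v$ with $i < j$, the union of the cycle $v\,a_i\,P_1[a_i,a_j]\,a_j\,v$ with the two tails $P_1[a_1,a_i]$ and $P_1[a_j,a_m]$ is a spanning subgraph of $G$ of maximum degree $3$ whose only cycle has length $(j-i)+2$; this is a spanning cactus, and it is even precisely when $i \equiv j \pmod 2$. Hence the construction succeeds as soon as $v$ has two neighbors of equal parity along $P_1$. By pigeonhole this happens whenever $\deg_G(v) \ge 3$, so I may further assume $\deg_G(v) = 2$ with neighbors $a_i, a_j$ of opposite parity, neither of them an endpoint. In every cactus of this form the cycle through $v$ is odd, which is exactly the obstruction to applying Lemma \ref{ham-hairy} directly.

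This opposite-parity case is the heart of the argument, and the plan is to repair the parity using the $2$-connectivity of $G$. Since $\delta(G) \ge 2$, the endpoints $a_1$ and $a_m$ have chords into the interior of $P_1$, and more generally $2$-connectivity supplies alternate routes between vertices of $P_1$; I would use these in P\'osa-type rotation and exchange arguments on the Hamilton path of $G - v$, aiming either to move one of $a_i, a_j$ to an endpoint (yielding a Hamilton path of $G$) or to change the parity of the separation between $a_i$ and $a_j$ (yielding an even cycle through $v$, hence an even cactus and Lemma \ref{ham-hairy}). The main obstacle is to show that such a move is always available, i.e.\ that one cannot be trapped with an odd $a_i$--$a_j$ separation in \emph{every} Hamilton path of $G - v$.

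The only structural way the repair can fail is if $G - v$ is bipartite with $a_i$ and $a_j$ in opposite classes, since then the two neighbors sit at opposite-parity positions on every Hamilton path. Here I would exploit the tightness of the hypotheses: because any independent set of $G-v$ is independent in $G$, we have $\alpha(G) \ge \alpha(G-v) \ge \lceil (|V(G)|-1)/2 \rceil$, so $\alpha(G)=4$ forces $G-v$ to have at most eight vertices. Thus I expect the proof to split into a general rotation argument, valid once $G$ is large enough to exclude the bipartite obstruction, together with a finite check of the few small graphs in which that obstruction could survive.
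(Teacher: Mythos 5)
Your reduction is sound as far as it goes: assuming the single-vertex path is $P_2=\{v\}$, you correctly dispose of the case where $v$ meets an endpoint of $P_1$ (Hamilton path) and the case where $v$ has two neighbours $a_i,a_j$ with $i\equiv j\pmod 2$ (even cycle through $v$ plus two tails). But the remaining case --- $v$ with exactly two internal neighbours at opposite-parity positions --- is precisely where your argument stops being a proof. The proposed P\'osa rotation/exchange step is not carried out, and the assertion that the only way it can fail is when $G-v$ is bipartite is unjustified: rotations from a fixed Hamilton path of $G-v$ reach only a restricted family of Hamilton paths, and nothing you say shows that non-bipartiteness yields one in which $a_i$ and $a_j$ sit at equal-parity positions. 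The concluding ``finite check'' of graphs on at most nine vertices is likewise only announced, not performed. So the heart of the claim is left open.

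The underlying misstep is your remark that ``in every cactus of this form the cycle through $v$ is odd, which is exactly the obstruction'': the even cycle of the cactus does not have to pass through $v$ at all, since $v$ may be hung off the rest of the structure as a pendant path of length one. The paper exploits exactly this freedom. Taking $b_1$ to be the neighbour of $v$ closest to one end of $P_1$, $2$-connectivity (delete $b_1$) produces a chord $xy$ of $P_1$ with $x$ strictly before $b_1$ and $y$ strictly after it. One then has three candidate cycles: the $v$-cycle $vb_1\cup P_1[b_1,b_2]\cup b_2v$, the chord cycle $P_1[x,y]\cup yx$, and the combined cycle through $v$, $x$, $y$ using both the chord and the two edges at $v$. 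The sum of their lengths is even, so at least one is an even cycle, and whichever one it is extends to a spanning even cactus by attaching the leftover segments of $P_1$ (and $v$ by a single edge, if the chosen cycle avoids $v$). This is a short parity argument needing no rotations, no bipartite dichotomy, and no finite case analysis; to complete your proof you would need to supply something playing the role of this crossing-chord construction.
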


 \begin{proof}
 Suppose $u_1=u_2=u$. Since $G$ is $2$-connected, there are two edges
from $u$ to $P_2$, say $ub_1$ and $ub_2$. If $b_1$ or $b_2$ belongs to
$\{ v_1, v_2 \}$, then $G$ has a Hamilton path, and hence is
prism-Hamiltonian. Now suppose $b_1$ is the neighbor of $u$ closest to
$v_1$ in $P_2$. Since $G$ is $2$-connected, there exists an edge $xy \in
E(G)$ such that $x \in V(P_2[v_1, b_1))$ and $y \in V(P_2(b_1, v_2])$.
One of the cycles $P_1[y, b_2] \cup b_2ub_1 \cup P_2[b_1, x] \cup xy$,
$P_2[b_1,b_2] \cup b_2 u b_1$ or  $P_2[x, y] \cup xy$ is an even cycle and the
even cycle together with remaining two path segments of $P_2$ form a spanning even
cactus, and hence $G$ is prism-Hamiltonian.
 \end{proof}

Suppose  $u_1 \neq u_2$ and $v_1 \neq v_2$. Since $G$ is $2$-connected,
there are distinct vertices $a_1, a_2 \in V(P_1)$ and $b_1, b_2 \in
V(P_2)$ such that $a_1a_2, b_1b_2 \in E(G)$. We may assume that $u_1, a_1, a_2, u_2$ occur in that order on $P_1$, and $v_1, b_1, b_2, v_2$ occur in that order on $P_2$.

 \begin{claim}\label{pathparity}% c2
 The orders of the paths $P_1[a_1, a_2]$ and $P_2[b_1, b_2]$ have
different parity; otherwise, $G$ is prism-Hamiltonian.
 \end{claim}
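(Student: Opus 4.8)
The plan is to establish the claim in its contrapositive form: assuming the inner segments $P_1[a_1,a_2]$ and $P_2[b_1,b_2]$ have orders of the \emph{same} parity, I would build a spanning even cactus in $G$ and then conclude prism-Hamiltonicity directly from Lemma~\ref{ham-hairy}. The only raw material is the two disjoint paths $P_1, P_2$, which together cover $V(G)$, and the two independent cross edges between them supplied by $2$-connectivity (whose $P_1$-endpoints are $a_1,a_2$ and $P_2$-endpoints are $b_1,b_2$). So the whole construction must be spliced together from these pieces.

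Concretely, I would use the two cross edges to weld the two inner segments into a single cycle. In the parallel pairing the cycle is
\[
C = P_1[a_1,a_2]\ \cup\ a_2b_2\ \cup\ P_2[b_1,b_2]\ \cup\ b_1a_1,
\]
and in the crossing pairing one uses $a_2b_1$ and $b_2a_1$ instead; either way $C$ consists of all of $P_1[a_1,a_2]$, all of $P_2[b_1,b_2]$, and both cross edges. The four leftover pieces $P_1[u_1,a_1]$, $P_1[a_2,u_2]$, $P_2[v_1,b_1]$, and $P_2[b_2,v_2]$ are pairwise vertex-disjoint, being subpaths of the two disjoint paths, and each meets $C$ in exactly one of $a_1,a_2,b_1,b_2$. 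Hanging these four tails off $C$ thus yields a connected spanning subgraph of maximum degree $3$ whose unique cycle is $C$, i.e.\ a spanning cactus. Degenerate tails (for instance $a_1=u_1$) cause no difficulty, since they merely leave the corresponding attachment vertex with degree $2$.

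It then remains to check parity. Writing $s$ and $t$ for the orders of $P_1[a_1,a_2]$ and $P_2[b_1,b_2]$, the cycle $C$ has $(s-1)+(t-1)+2 = s+t$ edges, so $C$ is even precisely when $s$ and $t$ share a parity. Under the present assumption $C$ is even, the cactus is even, and Lemma~\ref{ham-hairy} delivers prism-Hamiltonicity, proving the claim. I do not expect any deep obstacle in this step: the argument is a routine splicing, and essentially all the work is the bookkeeping needed to confirm that the assembled subgraph really is a cactus (single cycle, maximum degree $3$, spanning) and that it survives the degenerate cases. The genuinely hard part of Case~1 should only emerge \emph{after} this claim, where both segments have opposite parity and a single two-edge crossing no longer produces an even cycle, forcing a more delicate rerouting.
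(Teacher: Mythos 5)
Your proposal is correct and matches the paper's own argument: both form the even cycle $P_1[a_1,a_2] \cup a_2b_2 \cup P_2[b_2,b_1] \cup b_1a_1$ when the two segments have the same parity and attach the four remaining subpaths of $P_1$ and $P_2$ as tails to obtain a spanning even cactus, invoking Lemma~\ref{ham-hairy}. Your explicit edge count and the check of the degenerate tails are just slightly more detailed bookkeeping than the paper provides.
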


\begin{proof}
Suppose the orders of the paths $P_1[a_1, a_2]$ and $P_2[b_1, b_2]$ have same parity. Then $P_1[a_1, a_2] \cup a_2b_2 \cup  P_2[b_2, b_1] \cup b_1a_1$ is an even cycle. This cycle together with remaining path segments of $P_1$ and $P_2$ form a spanning even cactus, i.e., the even cycle together with $P_1 - P_1[a_1, a_2]$ and $P_2 - P_2[b_1, b_2]$. Therefore $G$ is prism-Hamiltonian.
\end{proof}

 \begin{claim}\label{pathchord}% c3
 If $P_2[x, y] \cap P_2[b_1, b_2]$ has at least one edge and $xy \in
E(G) \setminus E(P_2)$ for $x, y \in V(P_2)$, then $P_2[x, y] \cup yx$
is an even cycle; otherwise, $G$ is prism-Hamiltonian.
 \end{claim}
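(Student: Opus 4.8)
The plan is to establish the statement in its contrapositive form: I will show that if $D := P_2[x,y] \cup yx$ is \emph{not} an even cycle, then $G$ is prism-Hamiltonian, so that by Lemma~\ref{ham-hairy} it suffices to exhibit a spanning even cactus. So assume $D$ is an \emph{odd} cycle. Since all of Case~1 is devoted to proving $G$ prism-Hamiltonian, and we may stop the moment this is known, Claim~\ref{pathparity} entitles us to assume that the cycle $C := P_1[a_1,a_2] \cup a_2b_2 \cup P_2[b_2,b_1] \cup b_1a_1$ is also \emph{odd} (otherwise $G$ is already prism-Hamiltonian, since different orders of $P_1[a_1,a_2]$ and $P_2[b_1,b_2]$ are exactly what makes $C$ odd). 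Thus I have two odd cycles $C$ and $D$ to combine.

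The crucial point is that $C$ and $D$ overlap along a single subpath. Both $P_2[b_1,b_2]$ and $P_2[x,y]$ are subpaths of the path $P_2$, so their intersection $Q := P_2[b_1,b_2] \cap P_2[x,y]$ is again a subpath, and by hypothesis it contains at least one edge. Away from $Q$ the cycles are internally disjoint, since $V(C) \cap V(D) = V(P_2[b_1,b_2]) \cap V(P_2[x,y]) = V(Q)$ (the $P_1$-part of $C$ is disjoint from $P_2$) and $E(C) \cap E(D) = E(Q)$ (the chord $xy \notin E(P_2)$). Consequently the symmetric difference $C' := C \triangle D$ is a single cycle, and its length \[ |E(C')| = |E(C)| + |E(D)| - 2\,|E(Q)| \] is even, being a sum of two odd numbers minus an even number. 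So $C'$ is an even cycle, obtained from $C$ by rerouting its $Q$-portion through the chord $xy$.

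It then remains to extend $C'$ to a spanning even cactus. The vertices missed by $C'$ are exactly the internal vertices of $Q$ (a subpath of $P_2$) together with the four ``outer'' segments $P_1[u_1,a_1)$, $P_1(a_2,u_2]$ and the two end-segments of $P_2$ before $\min\{x,b_1\}$ and after $\max\{y,b_2\}$ in the ordering along $P_2$. Each of these is a path whose attaching end-vertex ($a_1$, $a_2$, an end of $Q$, or an outer end of $P_2[b_1,b_2]\cup P_2[x,y]$) lies on $C'$, so each can be hung off $C'$ as a pendant path. This yields a connected spanning subgraph of maximum degree $3$ whose unique cycle $C'$ is even, that is, a spanning even cactus, and Lemma~\ref{ham-hairy} finishes the argument.

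The main obstacle is the degree bookkeeping in this last step. One must verify, as $x$ and $y$ range over their possible positions relative to $b_1$ and $b_2$ (inside or outside $[b_1,b_2]$, on the same or opposite sides), that the pendant paths can be attached without raising any vertex to degree $4$ and without creating a second cycle. Since both ends of $Q$ lie on $C'$ and one is free to hang the internal vertices of $Q$ at whichever end is less congested, generic configurations cause no trouble; the delicate points are the coincidences among $x,y,b_1,b_2$ (most sharply when $x=b_1$ and $y=b_2$, so that the chord is $b_1b_2$ and both cross-edge endpoints already carry outer pendants), where a local rerouting of the attachment is required to keep the maximum degree at $3$. Handling these degenerate overlaps uniformly is where the real work lies.
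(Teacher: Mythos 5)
Your argument is essentially the paper's: invoke Claim~\ref{pathparity} to assume the cycle $C=P_1[a_1,a_2]\cup a_2b_2\cup P_2[b_2,b_1]\cup b_1a_1$ is also odd, take its symmetric difference with the odd cycle $P_2[x,y]\cup yx$ (the two meet in a single subpath of $P_2$ containing at least one edge, so the result is a single even cycle), and hang the remaining segments of $P_1$ and $P_2$ on it as pendant paths to obtain a spanning even cactus --- the paper compresses all of this into one line and is no more explicit than you are about the degree bookkeeping. The one configuration you single out as genuinely delicate, $\{x,y\}=\{b_1,b_2\}$ with all three of $P_2[v_1,b_1)$, $P_2(b_1,b_2)$, $P_2(b_2,v_2]$ nonempty, is in fact not repairable by rerouting (the only even cycle available is $C\triangle D$, and the three pendant pieces then compete for two degree-$2$ attachment points), but this configuration never arises at any place the claim is invoked --- in every application either $x$ and $y$ avoid $\{b_1,b_2\}$ (respectively $\{a_1,a_2\}$) or the coinciding endpoint is an end of the path so the corresponding outer segment is empty --- so the claim is sound as used.
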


\begin{proof}
Suppose $P_2[x, y] \cup yx$ is an odd cycle.  By Claim \ref{pathparity}, $P_1[a_1, a_2] \cup a_2b_2 \cup P_2[b_2, b_1] \cup b_1a_1$ is an odd cycle. Then combining these two odd cycles form an even cycle which yields to a spanning even cactus. (Same statement holds for edges between two vertices of $P_1$.)
\end{proof}

 \begin{claim}% c4
 The set $\{ u_1, u_2, v_1, v_2 \}$ is an independent set; otherwise, $G$
is prism-Hamiltonian.
 \end{claim}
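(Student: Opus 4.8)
The plan is to prove the final claim, that $\{u_1, u_2, v_1, v_2\}$ is an independent set unless $G$ is prism-Hamiltonian. I would argue by contradiction: suppose some edge joins two of these four endpoints, and show that in every case $G$ admits a spanning even cactus, so by Lemma~\ref{ham-hairy} it is prism-Hamiltonian. The key tool throughout is that whenever I find an edge between the two paths (or a chord within one path), I can splice the two paths into a single cycle together with dangling path segments, and the parity machinery of Claims~\ref{pathparity} and~\ref{pathchord} lets me guarantee the cycle is even.

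First I would handle the edges that lie ``within'' a single path's end, namely $u_1 u_2$ and $v_1 v_2$. If $u_1 u_2 \in E(G)$ then $P_1$ together with the edge $u_1 u_2$ closes into a cycle spanning $V(P_1)$; combined with the fact that $P_2$ can be attached via the cross edges $a_1 a_2$, $b_1 b_2$ already in hand, I would assemble a spanning subgraph and adjust parity using Claim~\ref{pathchord} (applied to the chord $u_1 u_2$) to ensure the relevant cycle is even. The more interesting edges are the four ``cross'' edges $u_1 v_1$, $u_1 v_2$, $u_2 v_1$, $u_2 v_2$: any one of these immediately concatenates $P_1$ and $P_2$ into a single Hamilton path of $G$ (for instance $u_2 v_1$ gives the path $P_1$ from $u_1$ to $u_2$, then across to $v_1$, then along $P_2$ to $v_2$), and a Hamilton path yields prism-Hamiltonicity by Lemma~\ref{ham-hairy}. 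So those cases are essentially immediate.

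The main obstacle, and where I expect to spend the real effort, is the case $u_1 u_2 \in E(G)$ (and symmetrically $v_1 v_2 \in E(G)$), because there the resulting cycle on $V(P_1)$ has the parity of $|V(P_1)|$, which is not under my control, so I cannot simply declare it even. The idea is to use the cycle $P_1 \cup u_1 u_2$ as one cactus cycle and route $P_2$ as a pendant path attached through a single cross edge; but if no cross edge between the interiors exists I must fall back on the guaranteed edge $b_1 b_2$ and the edge $a_1 a_2$, reshaping the configuration. I would combine these with Claim~\ref{pathchord}: the chord $u_1 u_2$ relative to the segment $P_1[a_1,a_2]$ either gives an even cycle directly, or, being odd, can be merged with the odd cycle $P_1[a_1,a_2] \cup a_2 b_2 \cup P_2[b_2,b_1] \cup b_1 a_1$ from Claim~\ref{pathparity} to produce an even cycle, yielding the spanning even cactus in either subcase.

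In each case my target structure is the same: a single even cycle, with all remaining vertices carried on vertex-disjoint paths pendant to that cycle, meeting the definition of a spanning even cactus. I would organize the write-up as a short case analysis over which of the six possible edges among $\{u_1,u_2,v_1,v_2\}$ is present, dispatch the four cross edges via the Hamilton-path observation, and treat $u_1 u_2$ and $v_1 v_2$ symmetrically using the parity-merging trick. Once every case produces a spanning even cactus or a Hamilton path, Lemma~\ref{ham-hairy} (or Corollary~\ref{hamilton path} via the path) finishes the argument, contradicting the assumption that $G$ is not prism-Hamiltonian and forcing $\{u_1,u_2,v_1,v_2\}$ to be independent.
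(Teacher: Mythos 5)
Your reduction to the case $u_1u_2 \in E(G)$ (or symmetrically $v_1v_2 \in E(G)$) is correct and matches the paper: any edge $u_iv_j$ concatenates $P_1$ and $P_2$ into a Hamilton path. Your parity concern about the cycle $P_1 \cup u_1u_2$ is also legitimate, but it is already disposed of by Claim~\ref{pathchord} (the $P_1$ version, applied to the chord $u_1u_2$, whose span contains all of $P_1[a_1,a_2]$): either that cycle is even or we are done, and your ``merge with the odd cycle from Claim~\ref{pathparity}'' idea is exactly what the proof of Claim~\ref{pathchord} already does. The genuine gap is in the step ``route $P_2$ as a pendant path attached through a single cross edge.'' Every cross edge from $V(P_1)$ lands at an \emph{interior} vertex of $P_2$ (an edge to $v_1$ or $v_2$ would give a Hamilton path by going around the cycle $P_1 \cup u_1u_2$), so $P_2$ together with that cross edge is not a path but a tree with a degree-$3$ branch vertex lying on no cycle. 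Such a subgraph cannot be decomposed into vertex-disjoint cycles plus vertex-disjoint paths, so it is not a spanning even cactus and Lemma~\ref{ham-hairy} does not apply; a branch vertex off all cycles is a real obstruction (compare the prism over $K_{1,3}$), not a technicality. Your fallback about ``reshaping the configuration'' using $a_1a_2$ and $b_1b_2$ (which, note, are not cross edges between the two paths; the cross edges in the setup are $a_1b_1$ and $a_2b_2$) does not say how this branch vertex is removed.

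The paper's fix is precisely the idea your proposal is missing: put the attachment vertex on a second cycle, where degree $3$ is permitted. Take $b_1$ to be the vertex of $P_2$ closest to $v_1$ having a neighbor $a_1$ on $P_1$; then $2$-connectedness forces a chord $xy$ of $P_2$ with $x \in V(P_2[v_1,b_1))$ and $y \in V(P_2(b_1,v_2])$, and Claim~\ref{pathchord} lets one assume $P_2[x,y] \cup yx$ is even. The spanning even cactus is then the two even cycles $P_1 \cup u_1u_2$ and $P_2[x,y] \cup yx$ joined by the one-edge path $a_1b_1$, with $P_2[v_1,x]$ and $P_2[y,v_2]$ pendant at $x$ and $y$. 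Without this second cycle your construction does not close, so as written the proposal has a genuine gap in the one case that carries all the difficulty.
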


\begin{proof}
By contradiction suppose $\{ u_1, u_2, v_1, v_2 \}$ is not an independent set. If $u_iv_j \in E(G)$ for $i,j \in \{ 1, 2\}$ then $G$ contains a Hamilton path and hence it is prism-Hamiltonian.

Thus, we may assume that $u_1u_2 \in E(G)$, i.e., $P_1 \cup  u_1u_2$ is
a cycle.
 By Claim \ref{pathchord}, $P_1 \cup  u_1u_2$ is an even cycle. We can
assume that  $b_1$  is the closest neighbor of a vertex of $P_1$ on
$P_2$  to $v_1$. Then, by $2$-connectedness and since $b_1$ is the
closest vertex to $v_1$ adjacent to a vertex of $P_1$, there is an edge
$xy$ such that $x \in V(P_2[v_1, b_1))$ and $y \in V(P_2(b_1, v_2])$.
Then by Claim \ref{pathchord}, $P_2[y, x] \cup xy$ is an even cycle. Therefore $P_1
\cup u_1u_2$ and $P_2[x, y] \cup yx$ are even cycles and together with
the edge $a_1b_1$ and remaining path segments of $P_2$ form a spanning
even cactus. \end{proof}

 \begin{claim}\label{noedgebetween}% c5
 There is no  edge $xy$ with $x \in V(P_1)-\{a_1,a_2\}$ and $y \in
V(P_2)-\{b_1,b_2\}$; otherwise, $G$ is prism-Hamiltonian.
 \end{claim}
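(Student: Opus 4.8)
The plan is to assume that such a crossing edge $xy$ exists, with $x \in V(P_1) \setminus \{a_1,a_2\}$ and $y \in V(P_2) \setminus \{b_1,b_2\}$, and to produce a spanning even cactus, so that $G$ is prism-Hamiltonian by Lemma \ref{ham-hairy}. The guiding observation is that we now have \emph{three} edges running between $P_1$ and $P_2$, namely $a_1b_1$, $a_2b_2$ and $xy$, and any two of them splice the two paths into one cycle. Precisely, for crossing edges $e = pq$ and $f = p'q'$ with $p,p' \in V(P_1)$ and $q,q' \in V(P_2)$, the closed walk $C(e,f) := P_1[p,p'] \cup p'q' \cup P_2[q',q] \cup qp$ is a genuine cycle whenever $p \neq p'$ and $q \neq q'$, and counting edges shows its length equals $|V(P_1[p,p'])| + |V(P_2[q,q'])|$. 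Hence the parity of $C(e,f)$ depends only on the parities of the positions of the four endpoints along their paths.

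To organize this I would index the vertices of $P_1$ and of $P_2$ consecutively and assign to each crossing edge $e = p_i q_j$ the value $\sigma(e) := (i+j) \bmod 2$; the length computation above then reduces to the statement that $C(e,f)$ is even exactly when $\sigma(e) = \sigma(f)$. Now $C(a_1b_1, a_2b_2)$ is precisely the cycle analyzed in Claim \ref{pathparity}, so it is odd (otherwise $G$ is already prism-Hamiltonian), which gives $\sigma(a_1b_1) \neq \sigma(a_2b_2)$. Since $\sigma(xy) \in \{0,1\}$ must agree with one of these two values, at least one of $C(a_1b_1, xy)$ and $C(a_2b_2, xy)$ is even; call it $C$. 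Here the hypotheses $x \notin \{a_1,a_2\}$ and $y \notin \{b_1,b_2\}$ do exactly the work needed: whichever of the two pairs we are forced to use, its two $P_1$-endpoints are distinct and its two $P_2$-endpoints are distinct, so the sub-paths used by $C$ are nondegenerate and $C$ is a bona fide even cycle of length at least $4$.

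Finally I would assemble the cactus exactly as in the proof of Claim \ref{pathparity}: $C$ traverses one contiguous sub-path of $P_1$ and one of $P_2$, and the remaining (at most four) sub-paths of $P_1$ and $P_2$ attach to $C$ at the four vertices where they meet it. Each such vertex has degree $2$ in $C$ and gains a single pendant edge, so the maximum degree stays at $3$; the attached pieces are paths, so no cycle other than $C$ is created; and every vertex of $G$ is covered, so this is a spanning even cactus. I expect the only real content to be the parity bookkeeping of the second paragraph, which lets one avoid any case analysis on the relative positions of $x$ and $y$: once an even $C(\cdot,\cdot)$ has been located, the assembly step is identical to the earlier cases. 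The point most worth pinning down is the nondegeneracy of the chosen $C$, which is exactly why both exclusions $x \notin \{a_1,a_2\}$ and $y \notin \{b_1,b_2\}$ appear in the hypothesis.
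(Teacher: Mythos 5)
Your proof is correct and takes essentially the same route as the paper: the paper also uses Claim \ref{pathparity} to conclude that the cycle through $a_1b_1$ and $a_2b_2$ is odd, deduces that one of the two cycles formed by $xy$ together with $a_1b_1$ or with $a_2b_2$ (its $Z_1$ and $Z_2$) must be even, and attaches the remaining path segments to obtain a spanning even cactus. Your $\sigma$-invariant is just a more explicit encoding of the same parity argument, and your closing remark about why the exclusions $x \notin \{a_1,a_2\}$, $y \notin \{b_1,b_2\}$ are needed matches the discussion the paper gives in the proof of Claim \ref{noedgebetween2}.
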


\begin{proof}
 Suppose $xy \in E(G)$ for $x \in V(P_1)$ and $y \in
V(P_2)$.  By Claim \ref{pathparity}, $P_1[a_1, a_2] \cup a_2b_2 \cup
P_2[b_2, b_1] \cup b_1a_1$ is an odd cycle. Then one of the cycles
$\hbox{$Z_1=$} P_1 [x, a_1] \cup a_1b_1 \cup P_2[b_1, y] \cup yx$ or
$\hbox{$Z_2=$} P_1 [x, a_2] \cup \hbox{$a_2b_2$} \cup P_2[b_2, y] \cup
yx$ is even and together with remaining path segments of $P_1$ and $P_2$
forms a spanning even cactus.
 \end{proof}

 \begin{claim}\label{noedgebetween2}% c6
 There is no edge $xy$ such that either (i) $x \in
\{u_1, u_2\}$ and $y \in V(P_2) - \{b_1, b_2\}$ or (ii) $x \in V(P_1) -
\{a_1, a_2\}$ and $y \in \{v_1, v_2\}$; otherwise, $G$ is
prism-Hamiltonian.
 \end{claim}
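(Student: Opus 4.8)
The plan is to reduce statement (i) to the already-proved Claim \ref{noedgebetween} whenever the endpoint is an ordinary vertex, and to dispose of the remaining boundary case by the very same cycle-swapping construction used there. Statement (ii) will then follow by the symmetry that interchanges $P_1$ with $P_2$ (and correspondingly $u_i$ with $v_i$ and $a_i$ with $b_i$), so I will treat only (i). By relabelling I may take $x=u_1$; the case $x=u_2$ is symmetric.

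So suppose $u_1 y \in E(G)$ with $y \in V(P_2)-\{b_1,b_2\}$. Since $u_1,a_1,a_2,u_2$ occur in this order on $P_1$, the only way $u_1$ can lie in $\{a_1,a_2\}$ is $u_1=a_1$. If instead $u_1 \notin \{a_1,a_2\}$, then $u_1 \in V(P_1)-\{a_1,a_2\}$ while $y \in V(P_2)-\{b_1,b_2\}$, so Claim \ref{noedgebetween} applies verbatim and $G$ is prism-Hamiltonian. Hence the only genuinely new situation is $u_1=a_1$, and this is where the endpoint of $P_1$ coincides with one of the chosen cross-edge vertices.

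In that case I would rerun the construction from Claim \ref{noedgebetween} with $x=a_1$. Set
\[
Z_1 = a_1 b_1 \cup P_2[b_1,y] \cup y a_1, \qquad Z_2 = P_1[a_1,a_2] \cup a_2 b_2 \cup P_2[b_2,y] \cup y a_1 ,
\]
which are exactly the cycles of Claim \ref{noedgebetween} with the segment $P_1[x,a_1]=P_1[a_1,a_1]$ degenerating to the single vertex $a_1$; both remain genuine cycles because $y \notin\{b_1,b_2\}$. Since by Claim \ref{pathparity} the cycle $P_1[a_1,a_2]\cup a_2b_2\cup P_2[b_2,b_1]\cup b_1a_1$ is odd, a parity count (the difference $|Z_2|-|Z_1|$ is congruent mod $2$ to the length of that odd cycle) shows exactly one of $Z_1,Z_2$ is even. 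Attaching the remaining segments of $P_1$ and $P_2$ to that even cycle, each along a single edge and hence at a single vertex, yields a spanning connected subgraph of maximum degree $3$ whose only cycle is the chosen even one, i.e.\ a spanning even cactus, so $G$ is prism-Hamiltonian by Lemma \ref{ham-hairy}.

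The step that needs care, and the main obstacle, is the cactus verification in the last line: for each possible location of $y$ on $P_2$ (in $P_2[v_1,b_1)$, in $P_2(b_1,b_2)$, or in $P_2(b_2,v_2]$) one must check that deleting the chosen even cycle leaves path pieces that each reattach to it at exactly one vertex, so that no second cycle is created, no vertex exceeds degree $3$, and every vertex is covered. This is routine and parallels the corresponding check inside Claim \ref{noedgebetween}, but it should be carried out in both the $Z_1$ sub-case and the $Z_2$ sub-case, since either cycle may turn out to be the even one.
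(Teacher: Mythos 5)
Your proposal matches the paper's proof: reduce to Claim \ref{noedgebetween} unless $x=u_1=a_1$, and in that boundary case rerun the same $Z_1/Z_2$ construction, which still yields a spanning even cactus precisely because the segment of $P_1$ on the $u_1$ side of $a_1$ is trivial, so no degree-$4$ vertex arises at $a_1$. The paper states this degree observation a little more explicitly than you do, but the argument is the same.
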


 \begin{proof} Without loss of generality suppose that (i) holds with
$x=u_1$.  If $x \ne a_1$ then the result follows by Claim
\ref{noedgebetween}, so suppose that $x=u_1=a_1$.
 The proof of Claim \ref{noedgebetween} fails when $x=a_1$ because if we
need to construct a spanning even cactus from the cycle $Z_1$ then we
would have to attach two path segments of $P_1$ at $x=a_1$, creating a
degree $4$ vertex, which is not allowed.
 However, since $x=u_1=a_1$ here one of these path segments is trivial (just
the single vertex $u_1$) so this does not create a problem now, and we
may proceed as in the proof of Claim \ref{noedgebetween}.
 \end{proof}

Now we may suppose that $\{ u_1, u_2, v_1, v_2 \}$ is an independent
set. By Claim \ref{pathparity}, the paths $P_1[a_1, a_2]$ and $P_2[b_1,
b_2]$  have different parity. Without loss of generality we can assume
that $P_2[b_1, b_2]$ has an odd number of vertices, and therefore there
is a vertex $x\in V(P_2(b_1, b_2))$. Since $\alpha(G)=4$, and $S = \{
u_1, u_2, v_1, v_2 \}$ is an independent set, $x$ is adjacent to some
vertex in $S$. By Claim \ref{noedgebetween2}, we may assume that $x$ is adjacent to neither
$u_1$ nor $u_2$. Without loss of generality we may assume that $x$ is
adjacent to $v_1$. Then by Claim \ref{pathchord}, the cycle $P_2[x, v_1]
\cup v_1x$ is even.
 If $a_1 = u_1$ then we have a spanning even cactus using
the cycle $P_2[x, v_1] \cup v_1x$ and paths $P_2[x,v_2]$ and $b_1u_1
\cup P_1$, so we may assume that $a_1 \ne u_1$. By $2$-connectedness there
is an edge $yz$ such that $y \in V(P_1[u_1, a_1))$ and $z \in V(P_2)
\cup V(P_1(a_1, u_2])$. So we have the following cases.

 \begin{ccase}% CAA
 If $z \in V(P_1(\hbox{$a_1$}, v_2])$, by Claim \ref{pathchord} we may
assume that $yz \cup P_1[z, y]$ is an even cycle. Then the cycles $v_1x
\cup P_2[x, v_1]$ and $yz \cup P_1[z, y]$ together with the edge
$a_1b_1$ and remaining path segments of $P_1$ and $P_2$ form a spanning
even cactus.
 \end{ccase}

 \begin{ccase}% CAB
 Suppose $z \in V(P_2)$. By Claim \ref{noedgebetween} we can assume
that $z=b_1$ or $z=b_2$ which lead us to the following cases.

 \begin{cccase}
 Suppose $z=b_2$. Then we can assume that the cycle $yb_2a_2 \cup
P_1[a_2, y]$ is even; otherwise, the cycle $P_2[b_1, b_2] \cup b_2y
\cup P_1[y, a_1] \cup a_1b_1$ is even and yields a spanning even cactus.
Therefore the even cycles $yb_2a_2 \cup P_1[a_2, y]$ and $v_1x \cup
P_2[x, v_1]$ together with the edge $a_1b_1$ and remaining path segments
of $P_1$ and $P_2$ form a spanning even cactus.
 \end{cccase}

 \begin{cccase}
 Suppose  $z=b_1$. Then for the same reason as above we can assume that
the cycle $yb_1a_1 \cup P_1[a_1, y]$ is even. Therefore there is a
vertex $c \in V(P_1(y, a_1))$. We can assume that $\hbox{$ca_1$} \in
E(P_1)$. Since $\alpha(G)=4$, and $S = \{ u_1, u_2, v_1, v_2 \}$ is an
independent set, $c$ is adjacent to some vertex in $S$. By Claim
\ref{noedgebetween2} we may assume that $c$ is adjacent to neither $v_1$
nor $v_2$. If $u_2c \in E(G)$ then by Claim \ref{pathchord}, $P_1[c,
u_2] \cup u_2c$ is an even cycle and together with $P_2[v_1, x] \cup
xv_1$ it yields a spanning even cactus. Hence we may assume that $u_1c
\in E(G)$.

If $u_1c \notin E(P_1)$, then we may assume that $P_1[c, u_1] \cup u_1c$
is an odd cycle; otherwise, together with $P_2[v_1, x] \cup xv_1$ it
yields a spanning even cactus. If $P_1[c, u_1] \cup u_1c$ is odd, then
$P_1[c, a_2] \cup a_2b_2 \cup P_2[b_2, b_1] \cup b_1y \cup P_1[y, u_1]
\cup u_1c$ is an even cycle and together with remaining path segments of
$P_1$ and $P_2$ forms a spanning even cactus.  Therefore we may assume
that $u_1c \in E(P_1)$, which implies $y=u_1$. Then $P_2[v_1, x] \cup
xv_1$ together with paths $\hbox{$b_1u_1$} \cup P_1$ and $P_2[x, v_2]$
forms a spanning even cactus.
 \end{cccase}
 \end{ccase}% CAB
 \end{case}% CA

 \begin{case}% CB
 Suppose that $k = \kappa(G) \geq 3$. Let $\alpha = \alpha(G)$ and let
$t = \alpha - k \ge 2$. Let $G'$ be the graph $G$ together with a $K_t$ and
all edges from these new $t$ vertices to $V(G)$. Then
$\alpha(G')\hbox{$= \alpha(G)$} \leq \kappa (G')\hbox{$ =
\kappa(G)+t$}$, hence by Theorem \ref{ch-e} $G'$ is Hamiltonian. By
removing these $t$ new vertices, we can cover all the vertices of $G$ by
$r \leq t$ vertex-disjoint paths, $P_1, P_2, \dots, P_r$. Let $v_1,
\dots, v_r$ be one of the end vertex of each of these $r$ paths. By
Theorem \ref{b-l} there is an even cycle, say $C$, passing through $v_1
, \dots, v_r$. Now we put a direction on each of these $r$ paths
starting from $v_i$, $1 \leq i \leq \hbox{$r$}$. Our goal is attaching
some paths to $C$ to form a spanning even cactus.

 Suppose $C$ intersects $P_i$ at $w^i_1 = v_i$, $w^i_2$, $\dots$,
$w^i_{k_i}$, in that order along $P_i$.  Let $x^i_{k_i}$ be the end of
$P_i$ other than $v_i$, and for $1 \le j \le k_i-1$ let $x^i_j$ be the
vertex immediately before $w^i_{j+1}$ on $P_i$.
 Then we add the paths $P_i[w^i_j, x^i_j]$, $1 \le i \le r$, $1 \le j
\le k_i$, to $C$. This process will form a spanning even cactus. Hence,
$G$ is prism-Hamiltonian. \qedhere% so end proof inside case
 \end{case}% CB
 \end{proof}

\section{Conclusion}

It is known \cite[Theorem 5.3]{Jackson} that $\alpha (G) \leq t
\kappa(G)$ implies Hamiltonicity of $G[K_t]$ (the lexicographic product
of $G$ and $K_t$). As an extension of Theorems \ref{ch-e} and \ref{main
theorem} we can ask whether $\alpha (G) \leq t \kappa(G)$ implies
Hamiltonicity of $G \Box K_t$ when $t \ge 3$.  We can
prove the following slightly weaker result.

\begin{prop} \label{main theorem 2}

Let $G$ be a graph, and $t \ge 3$ an integer. If $\alpha (G) \leq
(t-1) \kappa (G)$ then $G \Box C_t$, and hence $G \Box K_t$, is
Hamiltonian.

\end{prop}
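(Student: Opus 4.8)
The plan is to reduce Hamiltonicity of $G \Box K_t$ to the existence of a spanning tree of $G$ of bounded maximum degree, and then invoke Batagelj and Pisanski's Theorem \ref{tree}. Since $t \ge 3$, the cycle $C_t$ is a spanning subgraph of $K_t$, so $G \Box C_t$ is a spanning subgraph of $G \Box K_t$; hence it suffices to show that $G \Box C_t$ is Hamiltonian, and for that it suffices to find a $t$-tree $T$ of $G$ (a spanning tree with $\Delta(T) \le t$). Indeed, if such a $T$ has $\Delta(T) \ge 2$, then Theorem \ref{tree} gives that $T \Box C_t$ is Hamiltonian, and since $T$ is a spanning subgraph of $G$, the graph $T \Box C_t$ is a spanning subgraph of $G \Box C_t$, so a Hamilton cycle of $T \Box C_t$ is also one of $G \Box C_t$.

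The heart of the argument is thus to produce a $t$-tree from the hypothesis $\alpha(G) \le (t-1)\kappa(G)$. First I would note that since $\alpha(G) \ge 1$ we have $\kappa(G) \ge 1$, so $G$ is connected and does have spanning trees. The key step is to obtain a $(t-1)$-walk of $G$: this is exactly the $s = t-1$ instance of the general Jackson--Wormald walk bound, of which the quoted statement $\alpha(G) \le 2\kappa(G) \Rightarrow$ $2$-walk is the $s = 2$ case. Feeding this $(t-1)$-walk into Theorem \ref{k-walk, k-tree} yields a $t$-tree, completing the chain: $\alpha(G) \le (t-1)\kappa(G)$ gives a $(t-1)$-walk, which gives a $t$-tree, which gives that $G \Box C_t$ is Hamiltonian. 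This also pinpoints why the result is only \emph{slightly} weaker than the conjectured $\alpha(G) \le t\kappa(G) \Rightarrow G \Box K_t$ Hamiltonian: the single unit lost is precisely the walk-to-tree gap in Theorem \ref{k-walk, k-tree}, since a $(t-1)$-walk upgrades only to a $t$-tree, not to a $(t-1)$-tree.

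Two loose ends remain. The degenerate case $\Delta(T) < 2$, where Theorem \ref{tree} does not apply, arises only when $|V(G)| \le 2$; there $G \Box K_t$ is either $K_t$ or the prism $K_2 \Box K_t \supseteq K_2 \Box C_t$, both of which are Hamiltonian, so these can be dispatched by hand. The main obstacle is the $(t-1)$-walk step itself, since only the $s = 2$ case appears explicitly in the excerpt. If a direct citation of the general walk bound is unavailable, I would instead reprove it, or build the $t$-tree directly, by mimicking Case 2 of Theorem \ref{main theorem}: if $\alpha(G) \le \kappa(G)$ then $G$ is already Hamiltonian and a Hamilton path is a $2$-tree; otherwise adjoin $\alpha(G) - \kappa(G) \ge 1$ mutually adjacent universal vertices so that the Chv\'{a}tal--Erd\H{o}s condition holds, delete them from a Hamilton cycle of the enlarged graph to cover $V(G)$ by $r \le \alpha(G) - \kappa(G) \le (t-2)\kappa(G)$ vertex-disjoint paths, and then stitch these paths into a spanning tree using edges supplied by $\kappa(G)$-connectivity (routing through chosen vertices via Bondy--Lov\'{a}sz's Theorem \ref{b-l}), distributing the connecting edges so that no vertex accumulates degree exceeding $t$. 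Controlling these degrees when $r$ exceeds $\kappa(G)$, which occurs once $t \ge 4$, is the genuinely delicate point of this fallback route.
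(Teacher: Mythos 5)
Your proposal is correct and follows essentially the same route as the paper: obtain a $(t-1)$-walk from the Jackson--Wormald bound, convert it to a $t$-tree via Theorem \ref{k-walk, k-tree}, and apply Theorem \ref{tree} to get a Hamilton cycle in $G \Box C_t \subseteq G \Box K_t$. The paper simply cites the general $(t-1)$-walk result from \cite{Jackson} without reproving it, so your fallback construction is unnecessary.
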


\begin{proof}

We know that $\alpha (G) \leq (t-1) \kappa$ implies existence of  a
${(t-1)}$-walk in $G$. By Theorem \ref{k-walk, k-tree} existence of a
$(t-1)$-walk implies the existence of a $t$-tree and hence, by Theorem
\ref{tree}, Hamiltonicity of $G \Box C_t$.
 \end{proof}

 We assume the reader is familiar with the idea of toughness, introduced
by Chv\'{a}tal \cite{Chv73}, who conjectured that for some fixed $t$
every $t$-tough graph is Hamiltonian.
 For $k \ge 3$ we know that $(1/(k-2))$-tough graphs have a $k$-tree and
hence a $k$-walk \cite{Jackson, Win}, and $4$-tough graphs have a
$2$-walk \cite{EZ}.
 Kaiser et al.~\cite[Conjecture 4]{Kaiser} make the natural conjecture
that for some fixed $t$ all $t$-tough graphs are prism-Hamiltonian, and
show that $t$ must be at least $9/8$.

 While it appears very difficult to show that some constant toughness
implies Hamiltonicity or even prism-Hamiltonicity, Chv\'{a}tal-Erd\H{o}s
conditions combined with some simple observations suffice to show that
$\Omega(\sqrt n)$-tough graphs have these properties.
 As far as we can tell, no one has noted this before.
 Suppose $G$ is a non-complete $n$-vertex $t$-tough graph; let
$\alpha=\alpha(G)$ and $\kappa=\kappa(G)$.
 By \cite[Propositions 1.3 and 1.4]{Chv73},  $\kappa \ge 2t$ and $t \le
(n-\alpha)/\alpha$, or $n/(t+1) \ge \alpha$.  Using these, we obtain the
following.

\begin{prop}
Suppose $t > 0$, $n \ge 3$, and $G$ is a $t$-tough $n$-vertex graph.
 \begin{enumerate}[label=(\roman*)]
 \item If $2t(t+1) \ge n$ (e.g., if $t \ge \sqrt{n/2}$), then $G$ is
Hamiltonian.
 \item If $4t(t+1) \ge n$ (e.g., if $t \ge \sqrt n/2$), then $G$ is
prism-Hamiltonian.
 \end{enumerate}
\end{prop}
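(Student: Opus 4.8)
The plan is to feed the two structural consequences of $t$-toughness already recorded above --- namely $\kappa \ge 2t$ and $\alpha \le n/(t+1)$ for a non-complete graph --- directly into the Chv\'{a}tal--Erd\H{o}s-type hypotheses of Theorem \ref{ch-e} and Theorem \ref{main theorem}. No genuinely new graph-theoretic argument is needed; the content is entirely in chaining these inequalities correctly, so I expect the ``main obstacle'' to be merely bookkeeping together with the isolation of the degenerate case.

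First I would dispose of the complete case: if $G = K_n$ with $n \ge 3$, then $G$ is Hamiltonian and thus certainly prism-Hamiltonian (for instance via the implication from having a Hamilton path together with Lemma \ref{ham-hairy}), so both (i) and (ii) hold trivially. Hence I may assume $G$ is non-complete, which is exactly the regime in which the cited facts $\kappa \ge 2t$ and $\alpha \le n/(t+1)$ apply.

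For part (i), the hypothesis $2t(t+1) \ge n$ is equivalent to $n/(t+1) \le 2t$. Combining this with the two toughness facts gives the chain $\alpha \le n/(t+1) \le 2t \le \kappa$, so $\alpha \le \kappa$, and Theorem \ref{ch-e} yields that $G$ is Hamiltonian. For part (ii), the hypothesis $4t(t+1) \ge n$ rewrites as $n/(t+1) \le 4t$, and since $\kappa \ge 2t$ gives $2\kappa \ge 4t$, I obtain $\alpha \le n/(t+1) \le 4t \le 2\kappa$; thus $\alpha \le 2\kappa$, and Theorem \ref{main theorem} delivers prism-Hamiltonicity.

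Finally I would check the parenthetical sufficient conditions on $t$. Since $2t(t+1) = 2t^2 + 2t \ge 2t^2$, the bound $t \ge \sqrt{n/2}$ (i.e.\ $2t^2 \ge n$) implies $2t(t+1) \ge n$, verifying the example in (i); similarly $4t(t+1) \ge 4t^2$, so $t \ge \sqrt{n}/2$ (i.e.\ $4t^2 \ge n$) implies $4t(t+1) \ge n$, verifying the example in (ii). The only point requiring any care is ensuring the complete-graph case is separated out before invoking the non-complete toughness inequalities; beyond that the argument is a short computation.
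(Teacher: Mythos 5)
Your proof is correct and follows essentially the same route as the paper: dispose of the complete case, then chain $\alpha \le n/(t+1) \le 2pt \le p\kappa$ and invoke Theorem \ref{ch-e} for $p=1$ and Theorem \ref{main theorem} for $p=2$. The paper merely phrases the two cases uniformly with the parameter $p$, so there is no substantive difference.
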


\begin{proof}
 We may assume $G$ is non-complete.  If $p \ge 0$ and $2pt(t+1) \ge n$
then $p \kappa \ge 2pt \ge n/(t+1) \ge \alpha$.
 Applying Theorem \ref{ch-e} when $p=1$ and Theorem \ref{main theorem}
when $p=2$ gives the result.
 \end{proof}

\end{document}